\newcommand{\address}[1]{
\vspace*{-3pc}
\begin{center} #1 \end{center}
}
\theoremstyle{plain}
\newtheorem*{thm*}{\protect\theoremname}
\theoremstyle{plain}
\newtheorem{thm}{\protect\theoremname}
\theoremstyle{plain}
\newtheorem{lem}[thm]{\protect\lemmaname}
\date{}
\providecommand{\lemmaname}{Lemma}
\providecommand{\theoremname}{Theorem}
\begin{document}
\title{Commutator Products in Skew Laurent Series Division Rings}
\author{Hau-Yuan Jang\thanks{Partially supported by National Science and Technology Council, Taiwan,
grant \#113-2115-M-006-009.} \thanks{Email: l18121022@gs.ncku.edu.tw}\ ~and\ Wen-Fong Ke$^{*}$\thanks{Email: wfke@mail.ncku.edu.tw}}
\maketitle

\address{
Department of Mathematics, National Cheng Kung University, Tainan
70101, Taiwan}
\vspace{1pc}

\begin{abstract}
In 1965, Baxter established that a simple ring is either a field or
that every one of its elements can be expressed as a sum of products
of commutator pairs. In a recent paper, Gardella and Thiel demonstrated
that every element in a noncommutative division ring can be represented
as the sum of just two products of two commutators. They further posed
the question of whether every element in a noncommutative division
ring can be represented as the product of two commutators. In this
paper, we affirmatively answer this question for skew Laurent series
division rings over fields.
\end{abstract}
{\small\medskip{}
Keywords: commutator; skew Laurent Series division ring}\\
{\small 2020 Mathematics Subject Classification: 12E15, 16K20}{\small\par}

\section{Introduction}

\global\long\def\kbx{k(\!(x)\!)}%
\global\long\def\kbxs{k(\!(x^{s})\!)}%
\global\long\def\kbxn{k(\!(x^{n})\!)}%
\global\long\def\kdbx{k(\!(\sigma;x)\!)}%
\global\long\def\kzbxs{k_{0}(\!(x^{s})\!)}%
\global\long\def\kzbxn{k_{0}(\!(x^{n})\!)}%
For elements $a$ and $b$ in a ring, their commutator is defined
as $[a,b]=ab-ba$. A fundamental result by Baxter \cite[Theorem 1]{Baxter}
establishes that a simple ring is either a field or that every one
of its elements can be expressed as a sum of products of commutator
pairs.

Most recently, Gardella and Thiel \cite[Proposition 5.8]{GardellaT}
strengthened this result for division rings, demonstrating that every
element in a noncommutative division ring can be represented as the
sum of just two products of two commutators. They then posed the question
\cite[Question 5.9]{GardellaT} of whether every element in a noncommutative
division ring can be represented as the product of two commutators.

In this paper, we provide an affirmative answer to this question for
skew Laurent series division rings over fields. Specifically, we prove
the following statement:
\begin{thm*}
Let $D$ be a skew Laurent series division ring over a field. Then
every element of $D$ can be expressed as a product of two commutators.
\end{thm*}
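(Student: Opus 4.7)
The plan is to construct two commutators whose product equals a prescribed $d \in D = \kdbx$ by matching $x$-adic Laurent coefficients explicitly. Write $d = \sum_{i \ge n} d_i x^i$ with $d_n \neq 0$, where $n = v(d)$ denotes the $x$-adic valuation. The two building-block identities
\[
 [\alpha, b] \;=\; \sum_i b_i\bigl(\alpha - \sigma^i(\alpha)\bigr) x^i \qquad (\alpha \in k,\ b = \sum_i b_i x^i)
\]
and
\[
 [x^m, b] \;=\; \bigl(\sigma^m(b) - b\bigr)\, x^m
\]
will be the main tools. The first shows that $[\alpha, b]$ has zero constant term and, provided $\sigma^i(\alpha) \neq \alpha$, realizes any prescribed coefficient at $x^i$ via the choice $b_i = d_i/(\alpha - \sigma^i(\alpha))$; the second writes $[x^m,b]$ as $g\, x^m$ where every coefficient of $g$ lies in the additive image of $\sigma^m - \mathrm{id}$ on $k$.

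The strategy is to fix $\alpha_1, \alpha_2 \in k$ whose $\sigma$-orbits are large enough to cover the indices needed, to split the valuation as $n = n_1 + n_2$ with $n_1, n_2 \neq 0$ and $\sigma^{n_j}(\alpha_j) \neq \alpha_j$, and to set $A = [\alpha_1, b_1]$ and $B = [\alpha_2, b_2]$ with $b_1, b_2$ chosen so that $A, B$ have valuations $n_1, n_2$ and $AB = d$. The equations $(AB)_{n+r} = d_{n+r}$ for $r = 0, 1, 2, \ldots$ form a triangular cascade in the unknowns $(b_1)_i, (b_2)_j$: at each step, a leading new coefficient appears linearly with a nonzero scalar coefficient (namely $\sigma^{n_1+r}(B_{n_2})$ multiplying $A_{n_1+r}$, or $A_{n_1}$ multiplying $\sigma^{n_1}(B_{n_2+r})$) and can be solved for in terms of the previously determined data. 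When some element of $k$ has infinite $\sigma$-orbit, such $\alpha_1, \alpha_2$ exist trivially; when every element has finite orbit (as for $k = \overline{\mathbb{F}_p}$ with Frobenius), the Artin--Schreier-type surjectivity of $\sigma - \mathrm{id}$ on $k$ lets us realize every element of $D$ as a single commutator $[x, b]$, whence $d = 1 \cdot d$ factors into two commutators since $1$ itself equals $[x, c_{-1}x^{-1}]$ with $\sigma(c_{-1}) - c_{-1} = 1$.

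The main obstacle is ensuring the triangular cascade is solvable at every degree in the presence of finite $\sigma$-orbits. If $\alpha_1$ has orbit length $\ell_1$, then $\alpha_1 - \sigma^{n_1 + r}(\alpha_1) = 0$ whenever $\ell_1 \mid n_1 + r$, so $[\alpha_1, \cdot]$ contributes nothing at those indices, and the corresponding equation must be closed by making the second commutator absorb the contribution. This forces a careful choice of $\alpha_2$ (for instance, of orbit length coprime to $\ell_1$) so that at every failure index for $\alpha_1$ the quantity $\alpha_2 - \sigma^{n_2 + r}(\alpha_2)$ is nonzero. A further delicate case is when $\sigma$ itself has finite order $s$: the center of $D$ is then $\kzbxs$, the coefficients of $d$ at $x^{is}$ lie in the center, and no $[\alpha, \cdot]$ with $\alpha \in k$ can produce nonzero entries at those positions. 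In that situation at least one factor must be a commutator of the form $[x^m, \cdot]$ or $[\alpha x^m, \cdot]$, and one chooses the split $n = n_1 + n_2$ so that $s \nmid n_1$ and $s \nmid n_2$ (possible since $\sigma \neq \mathrm{id}$, so $s \ge 2$), verifying that enough parameters survive the cascade to solve the full system, including the central coefficients.
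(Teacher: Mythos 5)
Your overall device --- the identity $[\alpha,\sum_i b_i x^i]=\sum_i b_i(\alpha-\sigma^i(\alpha))x^i$ together with a triangular coefficient cascade for the product of two such commutators --- is exactly the engine the paper runs on (its equation (1) and Lemma~2), and it does carry the day whenever $k$ contains an element whose $\sigma$-orbit is infinite or has length at least $5$. The gaps are in the two escape hatches you propose for the remaining cases. First, the claim that when every element of $k$ has finite $\sigma$-orbit the map $\sigma-\mathrm{id}$ is surjective on $k$ (so that every element of $D$ is a single commutator $[x,b]$) is false in general: if $\sigma$ has finite order $s$, the additive Hilbert~90 identifies $\mathrm{Im}(\sigma-\mathrm{id})$ with the kernel of the trace $k\to k_{0}$, a proper $k_{0}$-subspace of codimension~$1$; in particular $1\in\mathrm{Im}(\sigma-\mathrm{id})$ forces $\mathrm{char}(k)\mid s$, so your factorization $d=[x,c_{-1}x^{-1}]\cdot d$ is unavailable outside special characteristics. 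Second, the device of choosing $\alpha_{1},\alpha_{2}$ with coprime orbit lengths cannot be realized when $\sigma$ has order $s\in\{2,3,4\}$: all orbit lengths divide $s$, the length-$1$ elements lie in $k_{0}$ and contribute nothing, and for $s=4$ the remaining lengths $2$ and $4$ are not coprime. With a single $\alpha$ of orbit length $4$, the cascade provably fails for series of valuation $\equiv 2\pmod 4$: the admissible leading exponents $u,v$ must avoid $0\pmod 4$ and satisfy $u\not\equiv v$ (otherwise $u+t+1$ and $v+t+1$ can simultaneously hit the forbidden residue), and the sumset of such pairs misses the residue~$2$.

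These small-order cases are precisely where the paper works hardest and where your sketch offers only the assertion that ``enough parameters survive the cascade.'' For $s\in\{2,3\}$ the paper abandons the cascade entirely and invokes the Amitsur--Rowen characterization of commutators in cyclic division algebras of degree $2$ or $3$ as the reduced-trace-zero elements, together with Haile's proposition, to write $f=(fw_{0}^{-1}g)(g^{-1}w_{0})$. For $s=4$ it proves that $[x,D]$ is the set of series with all coefficients in $L=\mathrm{Im}(\sigma-1)$ (of $k_{0}$-dimension $3$), shows $aL+bL=k$ for $k_{0}$-independent $a,b$, produces a factorization $a_{s}=ab$ with $a,b\in L$ and $aL+\sigma^{i}(b)L=k$ for all $i$ whenever $a_{s}\notin k_{1}=\{z\mid\sigma(z)=-z\}$, and removes the residual case $a_{s}\in k_{1}$ by conjugating by $y$. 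An argument of comparable substance is needed to close your outline; as written, the order $2$, $3$, and $4$ cases are not proved.
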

For our discussions, we fix a field $k$ and let $\sigma$ be an automorphism
of $k$ that is not the identity map.

Laurent series over $k$ are expressions of the form $\sum_{i=s}^{\infty}a_{i}x^{i}$,
where $x$ is an indeterminate, $s\in\mathbb{Z}$ and $a_{i}\in k$
for all $i\geq s$. The set of all Laurent series over $k$, denoted
by $\kbx$, forms a field under the usual addition and multiplication.

The skew Laurent series division ring $\kdbx$ consists of the Laurent
series over $k$, but with the additional multiplication rule $x^{i}a=\sigma^{i}(a)x$
for all $a\in k$ and $i\in\mathbb{Z}$. Here, $\sigma^{i}$ denotes
the $i$-th iterate of the automorphism $\sigma$. For further details
on such skew constructions, see \cite[Examples (1.7) and (1.8)]{Lam}.

The center $Z(D)$ of $D$ is
\begin{itemize}
\item $k_{0}=\{a\in k\mid\sigma(a)=a\}$ when $\sigma$ is of infinite order,
and
\item $\kzbxn$ if $\sigma$ is of finite order $n$. (See \cite[Proposition 14.2]{Lam}.)
\end{itemize}
\par\noindent In the later case, if we put $F=\kzbxn$ and $K=\kbxn$,
then $F$ and $K$ are subfields of $D$ such that $D$ is a centrally
simple cyclic division algebra over $F$ of degree~$n$ and $[D:K]=n$.
Moreover, $K$ is a maximal subfield of $D$ and $D\otimes_{F}K\cong M_{n}(K)$.
(See \cite[Theorem 14.6 and (14.13)]{Lam}.) For an element $f\in D$,
the trace of the matrix in $M_{n}(K)$ corresponding to $f\otimes1$
is called the reduced trace of $f$.

In the next section, we first prove the theorem for the cases where
the order of $\sigma$ is $2$ or $3$, utilizing the fact that $D$
is a centrally simple cyclic division algebra over $F$. Subsequently,
after establishing some technical lemmas, we address the case where
the order of $\sigma$ is at least $5$. Finally, we consider the
case where $\sigma$ has order~$4$ in the last subsection.

\section{Proof of the theorem}

\subsection{When $\sigma$ is of order $2$ or $3$.}

Let $\sigma$ be of order $n$, $n\in\{2,3\}$. Thus, $D$ is a centrally
simple cyclic division algebra over $F$ of degree $n=2$ or $3$,
accordingly. In such cases, by Corollary 0.9 and Theorem 0.10 in \cite{AmitsurR},
an element $f\in D$ is a commutator if and only if the reduced trace
of $f$ is zero.

Let $f$ be a nonzero element of $D$, and let $M$ be a maximal subfield
of $D$ containing $f$. By the proposition in \cite{Haile}, there
is a nonzero element $g\in D$ such that the reduced trace of $gh$
is zero for all $h\in M$. Moreover, for this $g$, there is an $F$-subspace
$W$ of $M$ with $\dim_{F}W=n-1\geq1$ such that the reduced trace
of $w^{-1}g$ and that of $g^{-1}w$ are zero for all $w\in W\setminus\{0\}$.
Now, we fix a nonzero element $w_{0}\in W$. Then $g^{-1}w_{0}$ is
of reduced trace zero. Also, from $fw_{0}^{-1}\in M$, we have that
$fw_{0}^{-1}g$ is of reduced trace zero as well. By the above mentioned
results of Amitsur and Rowen, both $fw_{0}^{-1}g$ and $g^{-1}w_{0}$
are commutators. Hence, $f=(fw_{0}^{-1}g)(g^{-1}w_{0})$ is a product
of two commutators. This finishes the case when $\sigma$ is of order
$2$ or of order~$3$.

\subsection{Some observations.}

For an element $a\in k$, we said that $a$ is of $\sigma$-degree
$m$ and denote it by $\deg_{\sigma}(a)=m$, where $m\in\mathbb{N}$,
if $\sigma^{j}(a)\neq a$ for $j\in\{1,...,m-1\}$ and $\sigma^{m}(a)=a$.
If $\sigma^{j}(a)\neq a$ for all $j>0$, then we say that $a$ is
of infinite $\sigma$-degree and write $\deg_{\sigma}(a)=\infty$.

Let $b\in k\setminus\{0\}$ be fixed. We notice that if $i\in\mathbb{Z}$
and $\sigma^{i}(b)\not=b$, then, for any $a\in k$,
\begin{align}
[b,(b-\sigma^{i}(b))^{-1}ax^{i}] & =b(b-\sigma^{i}(b))^{-1}ax^{i}-(b-\sigma^{i}(b))^{-1}ax^{i}b\nonumber \\
 & =b(b-\sigma^{i}(b))^{-1}ax^{i}-(b-\sigma^{i}(b))^{-1}\sigma^{i}(b)ax^{i}\nonumber \\
 & =(b(b-\sigma^{i}(b))^{-1}-\sigma^{i}(b)(b-\sigma^{i}(b))^{-1})ax^{i}\nonumber \\
 & =(b-\sigma^{i}(b))(b-\sigma^{i}(b))^{-1}ax^{i}\nonumber \\
 & =ax^{i}.\label{eq:(1)}
\end{align}

There are two cases to be considered.

\paragraph*{Case 1. $\boldsymbol{\deg_{\sigma}(b)=\infty}$. }

Let $\sum_{i=s}^{\infty}a_{i}x^{i}\in D$. Put $n=-|s|-1\leq-1$ and
set $m=s-n\geq1$. Then
\begin{align*}
\sum_{i=s}^{\infty}a_{i}x^{i}=\sum_{j=m}^{\infty}a_{n+j}x^{n+j} & =\sum_{j=m}^{\infty}\sigma^{n}(\sigma^{-n}(a_{n+j}))x^{n+j}\\
 & =\sum_{j=m}^{\infty}\sigma^{n}(b_{j})x^{n}\cdot x^{j}=\sum_{j=m}^{\infty}x^{n}\cdot(b_{j}x^{j})=x^{n}\cdot\sum_{j=m}^{\infty}b_{j}x^{j},
\end{align*}
where $b_{j}=\sigma^{-n}(a_{n+j})$ for $j=m,m+1,\dots$. From (\ref{eq:(1)}),
we have
\[
x^{n}=[b,(b-\sigma^{n}(b))^{-1}x^{n}]
\]
and
\[
\sum_{j=m}^{\infty}b_{j}x^{j}=\sum_{j=m}^{\infty}[b,(b-\sigma^{j}(b))^{-1}b_{j}x^{j}]=\Big[b,\sum_{j=m}^{\infty}(b-\sigma^{j}(b))^{-1}b_{j}x^{j}\Big].
\]
We have proved the following.
\begin{lem}
\label{lem:infinite}If $\deg_{\sigma}b=\infty$, then every element
of $D$ is a product of two commutators.
\end{lem}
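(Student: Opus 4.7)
My plan is to obtain this lemma as an immediate consequence of the two preparatory computations that precede its statement. For an arbitrary $f \in D$, I will first invoke the decomposition $f = x^n \cdot \sum_{j=m}^\infty b_j x^j$ with $n \leq -1$ and $m \geq 1$ established above; this step uses only the structure of $\kdbx$ and makes no use of any hypothesis on $b$.

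Next, I will use the commutator identity~(\ref{eq:(1)}) to realize each of the two factors as a single commutator. This is the point where the hypothesis $\deg_\sigma(b) = \infty$ enters: it guarantees $\sigma^i(b) \neq b$ for \emph{every} nonzero integer $i$, so the elements $b - \sigma^n(b)$ and $b - \sigma^j(b)$ for $j \geq m \geq 1$ are all nonzero and invertible in $k$. This is precisely what is needed for identity~(\ref{eq:(1)}) to apply at the single negative exponent $n$ and simultaneously at every positive exponent $j \geq m$, giving $x^n = [b, (b-\sigma^n(b))^{-1} x^n]$ and $\sum_{j=m}^\infty b_j x^j = \bigl[b,\sum_{j=m}^\infty (b-\sigma^j(b))^{-1} b_j x^j\bigr]$. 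Multiplying these two commutators then produces $f$ as a product of two commutators, with the $f=0$ case handled trivially by $[0,0][0,0]$.

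The only small point I expect to need checking is that the inner series $\sum_{j=m}^\infty (b-\sigma^j(b))^{-1} b_j x^j$ defines a bona fide element of $D$, so that the second commutator is well-formed. This is immediate: every coefficient lies in $k$ and the series starts at $x^m$ with $m \geq 1$, so it is a legitimate skew Laurent series by the very definition of $\kdbx$. I do not foresee any genuine obstacle here, since the real work was already done in deriving~(\ref{eq:(1)}) and the factorization $f = x^n g$; the hypothesis of infinite $\sigma$-degree is exactly the condition that makes every denominator which can possibly appear nonzero in a single stroke, so no case analysis on the exponents is needed.
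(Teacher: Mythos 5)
Your proposal is correct and is essentially the paper's own argument: the same factorization $f = x^n\cdot\sum_{j\geq m} b_j x^j$ with $n\leq -1$, $m\geq 1$, followed by identity~(\ref{eq:(1)}) applied to each factor, with the hypothesis $\deg_\sigma(b)=\infty$ ensuring all denominators $b-\sigma^i(b)$ are nonzero (including for negative $i$, as you correctly note). Nothing further is needed.
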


\paragraph*{Case 2. $\boldsymbol{\deg_{\sigma}b=n<\infty}$.}

In this case, we have $\sigma^{0}(b)=b$, and
\[
\sigma^{qn}(b)=\underbrace{\sigma^{n}(\dots(\sigma^{n}(\sigma^{n}}_{|q|\text{-times}}(b))\dots)=b
\]
for all $q\in\mathbb{Z}\setminus\{0\}$. If $m=qn+r$ where $q,r\in\mathbb{Z}$
and $0<r<n$, then $\sigma^{m}(b)=\sigma^{r}(\sigma^{qn}(b))=\sigma^{r}(b)\not=b$.
Thus, $\sigma^{m}(b)\not=b$ if $n\nmid m$.

Let $\sum_{i=s}^{\infty}a_{i}x^{i}\in D$ be such that $a_{i}=0$
whenever $n\mid i$; thus, $\sum_{i=s}^{\infty}a_{i}x^{i}=\sum_{i\geq s;n\nmid i}a_{i}x^{i}$.
Using (\ref{eq:(1)}), we have
\begin{align}
\sum_{i\geq s;n\nmid i}a_{i}x^{i} & =\sum_{i\geq s;n\nmid i}[b,(b-\sigma^{j}(b))^{-1}a_{i}x^{i}]\nonumber \\
 & =\Big[b,\sum_{i\geq s;n\nmid i}(b-\sigma^{j}(b))^{-1}a_{i}x^{i}\Big]\in[b,D_{n}]\subseteq[b,D].\label{eq:(2)}
\end{align}
 Let $D_{n}$ be the collection of all such elements of $D$.
\begin{lem}
\label{lem:i+j,i=000020neq=000020j}Let $f=\sum_{i=s}^{\infty}a_{i}x^{i}\in D$
with $a_{s}\not=0$. Suppose that there are $u,v\in\mathbb{Z}$ such
that $n\nmid(u-v)$, $n\nmid u$, $n\nmid v$ and $s=u+v$. Then $f=gh$
for some $g,h\in D_{n}$. Consequently, $f$ is a product of two commutators.
\end{lem}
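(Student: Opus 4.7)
The plan is to construct $g, h \in D_n$ with $f = gh$ by determining their coefficients inductively, and then to invoke (\ref{eq:(2)}) to express each of $g$ and $h$ as a single commutator. I seek elements of the form
\[
g = \sum_{i \geq u,\, n \nmid i} g_i x^i, \qquad h = \sum_{j \geq v,\, n \nmid j} h_j x^j,
\]
where the $g_i, h_j \in k$ are to be chosen. Since $n \nmid u$ and $n \nmid v$ by hypothesis, as soon as $g_u, h_v \neq 0$ both series lie in $D_n$; moreover $gh$ then has $x$-adic valuation exactly $u + v = s$, matching $f$.

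Using the skew rule $x^i a = \sigma^i(a) x^i$, the coefficient of $x^{s+t}$ ($t \geq 0$) in $gh$ equals
\[
(gh)_{s+t} = \sum_{\substack{a + b = t \\ a, b \geq 0}} g_{u+a}\, \sigma^{u+a}(h_{v+b}),
\]
with the convention that $g_{u+a}$ (resp.\ $h_{v+b}$) is zero whenever $n \mid (u+a)$ (resp.\ $n \mid (v+b)$). For the base case $t = 0$, I pick $g_u = 1$ and set $h_v = \sigma^{-u}(a_s) \neq 0$, so that $(gh)_s = a_s$. For the inductive step at level $t \geq 1$, assume $g_{u+a}$ for $a < t$ and $h_{v+b}$ for $b < t$ have already been chosen so that $(gh)_{s+t'} = a_{s+t'}$ for all $t' < t$. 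Then the only unknowns still appearing in $(gh)_{s+t} = a_{s+t}$ are $g_{u+t}$ and $h_{v+t}$, arising from the extreme pairs $(a,b) = (t,0)$ and $(0,t)$; the remaining ``cross'' terms with $0 < a, b < t$ are already fixed.

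The hypothesis $n \nmid (u - v)$ is precisely what rules out the obstruction $n \mid (u+t)$ and $n \mid (v+t)$ holding simultaneously, since their difference is $u - v$. Hence at each step at least one of $g_{u+t}, h_{v+t}$ is free. If both are free, I set $h_{v+t} = 0$ and solve the scalar equation $g_{u+t}\,\sigma^{u+t}(h_v) = a_{s+t} - C_t$ for $g_{u+t}$, where $C_t$ collects the already determined cross terms. If exactly one is free, the $D_n$ membership constraint forces the other to vanish and the same linear equation (or its symmetric counterpart involving $g_u$) determines the free coefficient. Solvability is immediate because $g_u, h_v \in k^\times$ and $\sigma$ is a field automorphism.

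I expect the main obstacle to be the case analysis at the inductive step: verifying that the three surviving parity cases of $(u+t, v+t)$ modulo $n$ are all compatible with $D_n$ membership and produce a solvable equation over $k$. Once this bookkeeping is completed, $g, h \in D_n$ satisfy $f = gh$; by (\ref{eq:(2)}) each of $g$ and $h$ is a single commutator $[b, \cdot]$, so $f$ is a product of two commutators, as asserted.
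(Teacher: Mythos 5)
Your proposal is correct and follows essentially the same route as the paper: an inductive coefficient-by-coefficient construction of $g,h$ supported off $n\mathbb{Z}$, using $n\nmid(u-v)$ to guarantee that at each level at least one of the two extreme coefficients is unconstrained, then invoking (\ref{eq:(2)}) to write each factor as a commutator. The only (immaterial) difference is the normalization of the leading coefficients ($g_u=1$ versus the paper's $c_v=1$).
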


\begin{proof}
For $g,h\in D$, $g=\sum_{j=u}^{\infty}b_{j}x^{j}$ and $h=\sum_{j'=v}^{\infty}c_{j'}x^{j'}$,
the following equations need to be satisfied in order to get $f=gh$:
\begin{align*}
 & b_{u}\sigma^{u}(c_{v})=a_{s},\\
 & b_{u}\sigma^{u}(c_{v+1})+b_{u+1}\sigma^{u+1}(c_{v})=a_{s+1},\\
 & b_{u}\sigma^{u}(c_{v+2})+b_{u+1}\sigma^{u+1}(c_{v+1})+b_{u+2}\sigma^{u+2}(c_{v})=a_{s+2},\\
 & \quad\vdots\\
 & b_{u}\sigma^{u}(c_{v+t})+b_{u+1}\sigma^{u+1}(c_{v+t-1})+\cdots+b_{u+t}\sigma^{u+t}(c_{v})=a_{s+t},\\
 & b_{u}\sigma^{u}(c_{v+t+1})+b_{u+1}\sigma^{u+1}(c_{v+t})+\cdots+b_{u+t+1}\sigma^{u+t+1}(c_{v})=a_{s+t+1},\\
 & \quad\vdots
\end{align*}

For each $t\geq0$, we shall find coefficients $b_{u},b_{u+1},...,b_{u+t},c_{v},c_{v+1},...,c_{v+t}\in k$
such that
\begin{enumerate}[label=(E.\arabic{enumi})]
\item the first $t+1$ of these equations hold, and
\item $b_{j}=0$ whenever $j\in n\mathbb{Z}$ and $c_{j'}=0$ whenever $j'\in n\mathbb{Z}$.
\end{enumerate}
For $t=0$, set $b_{u}=a_{s}$ and $c_{v}=1$, and (E.1) is satisfied.
Since $u\notin n\mathbb{Z}$ and $v\notin n\mathbb{Z}$ by the assumptions,
(E.2) is satisfied as well.

For $t=1$, either $u+1\in n\mathbb{Z}$ or $u+1\not\in n\mathbb{Z}$.
If $u+1\notin n\mathbb{Z}$, we take $c_{v+1}=0$ and
\[
b_{u+1}=\sigma^{u+1}(c_{v})^{-1}(a_{s+1}-b_{u}\sigma^{u}(c_{v+1}))=a_{s+1};
\]
if $u+1\in n\mathbb{Z}$, then $v+1\not\in n\mathbb{Z}$ by the assumptions,
and we take $b_{u+1}=0$ and
\[
c_{v+1}=\sigma^{-u}(b_{u}^{-1}(a_{s+1}-b_{u+1}\sigma^{u+1}(c_{v})))=\sigma^{-u}(a_{s}^{-1}a_{s+1}).
\]
In both cases, (E.1) and (E.2) are satisfied.

Assuming that $t\geq1$, and that we have found $b_{u},b_{u+1},\dots,b_{u+t},c_{v},c_{v+1},\dots,c_{v+t}$
such that (E.1) and (E.2) are satisfied. Again, we have to consider
whether $u+t+1$ is in $n\mathbb{Z}$ or not. If $u+t+1\not\in n\mathbb{Z}$,
we take $c_{v+t+1}=0$ (note that $v+t+1$ may or may not be in $n\mathbb{Z}$)
and
\[
b_{u+t+1}=\sigma^{u+t+1}(c_{v})^{-1}(a_{s+t+1}-(b_{u+1}\sigma^{u+1}(c_{v+t})+\cdots+b_{u+t}\sigma^{u+t}(c_{v+1}))).
\]
If $u+t+1\in n\mathbb{Z}$, then $v+t+1\notin n\mathbb{Z}$; we take
$b_{u+t+1}=0$ and
\[
c_{v+t+1}=\sigma^{-u}(b_{u}^{-1}(a_{s+t+1}-(b_{u+1}\sigma^{u+1}(c_{v+t})+\cdots+b_{u+t}\sigma^{u+t}(c_{v})))).
\]
In both cases, (E.1) and (E.2) are satisfied.

Continuing the process, we eventually find $g,h\in D_{n}$ such that
$f=gh$ as claimed. The last statement follows from (\ref{eq:(2)}).

Using Lemma \ref{lem:i+j,i=000020neq=000020j}, we can deduce the
following result.
\end{proof}
\begin{lem}
\label{lem:n=000020geq=0000205}If $\deg_{\sigma}b\geq5$, then every
element of $D$ is a product of two commutators.
\end{lem}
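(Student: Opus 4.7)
My plan is to reduce the lemma directly to Lemma \ref{lem:i+j,i=000020neq=000020j} by a short counting argument modulo $n$. Given $f\in D$, the case $f=0$ is immediate (write $0=[0,0]\cdot[0,0]$), so I take $f=\sum_{i=s}^{\infty}a_{i}x^{i}$ with $a_{s}\neq 0$; to apply Lemma \ref{lem:i+j,i=000020neq=000020j} it then suffices to exhibit integers $u,v$ with $u+v=s$ such that $n\nmid u$, $n\nmid v$, and $n\nmid(u-v)$.

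This reduces to a purely modular problem. Setting $r\equiv s\pmod n$, I would search for residues $u',v'\in\mathbb{Z}/n\mathbb{Z}$ satisfying $u'+v'\equiv r$, $u'\neq 0$, $v'\neq 0$, and $u'\neq v'$; any lift of $u'$ to an integer $u$, together with $v=s-u$, then gives the integers demanded by Lemma \ref{lem:i+j,i=000020neq=000020j}. Out of the $n$ pairs $(u',v')$ with $u'+v'\equiv r\pmod n$, at most one has $u'=0$, at most one has $v'=0$, and at most two have $u'=v'$, because $2u'\equiv r\pmod n$ has exactly one solution when $n$ is odd and zero or two solutions when $n$ is even. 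Consequently at most four pairs are forbidden, and the hypothesis $n\geq 5$ guarantees that an admissible pair always exists.

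Beyond this bookkeeping there is essentially no obstacle; the threshold $n\geq 5$ is exactly the point at which the count $1+1+2=4$ of forbidden pairs fails to exhaust all $n$ residue classes. For $n=4$ the four bad pairs can occupy all of $\mathbb{Z}/4\mathbb{Z}$ and the argument genuinely breaks down, which is consistent with the paper's announced plan to treat $\sigma$ of order~$4$ in a separate subsection.
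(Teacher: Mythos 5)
Your proof is correct and takes essentially the same route as the paper: both reduce to Lemma \ref{lem:i+j,i=000020neq=000020j} by showing that every residue class modulo $n$ is a sum $u'+v'$ with $u',v'$ nonzero and distinct in $\mathbb{Z}_{n}$. The only difference is that you verify this by counting the at most $1+1+2=4$ forbidden pairs among the $n$ pairs summing to a given residue, whereas the paper exhibits explicit representatives; your count is accurate (including the case analysis for $2u'\equiv r$ when $n$ is even) and your remark that the bound is sharp exactly at $n=4$ matches the paper's separate treatment of that case.
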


\begin{proof}
Suppose that $\deg_{\sigma}b=n\geq5$. If we can show that every $s\in\mathbb{Z}$
can be written as a sum $s=u+v$ for some $u,v\in\mathbb{Z}$ such
that $n\nmid(u-v)$, $n\nmid u$ and $n\nmid v$, then Lemma \ref{lem:i+j,i=000020neq=000020j}
applies, and we are done. Equivalently, if we can show that the set
$S=\{u+v\mid u,v\in\mathbb{Z}_{n},u\neq0,v\neq0,u\neq v\}$ is $\mathbb{Z}_{n}$,
then we are done. Let $m\in\mathbb{N}$ with $3\leq m\leq n$. Then
$m=1+(m-1)$ and $m-1\not=1$. Therefore, $\mathbb{Z}_{n}\setminus\{\overline{1},\overline{2}\}\subseteq S$.
With $n\geq5$, we have $2\neq n-1$ and $\overline{1}=\overline{2}+\overline{n-1}$;
also, $3\neq n-1$ and $\overline{2}=\overline{3}+\overline{n-1}$.
Therefore, $S=\mathbb{Z}_{n}$, and the result holds.
\end{proof}

\subsection{When $\sigma$ is of order at least $5$.}

Now, we assume that the order of $\sigma$ is at least $5$.

First, we consider the case when $\sigma$ is of infinite order. If
there exists an element $b\in k$ such that $\sigma^{i}(b)\neq b$
for all $i\in\mathbb{N}$, then Lemma \ref{lem:infinite} ensures
that every element of $D$ is a product of two commutators. Suppose
that every element in $k$ is of finite $\sigma$-degree, and let
$S=\{\deg_{\sigma}(a)\mid a\in k\setminus\{0\}\}$. We claim that
$S$ is infinite. For, if $S$ is finite, and $m$ is the least common
multiple of $S$, then $\sigma^{m}(a)=a$ for all $a\in k\setminus\{0\}$.
Certainly $\sigma^{m}(0)=0$. This contradicts to the assumption that
$\sigma$ is of infinite order. Therefore, $S$ is infinite, and so
there is some $b\in k$ with $\deg_{\sigma}(b)\geq5$. Lemma~\ref{lem:n=000020geq=0000205}
applies, and we have that every element of $D$ is a product of two
commutators.

Next, consider the case when $\sigma$ is of finite order $n$, $n\geq5$.
As $k_{0}$ is the fixed field of $\sigma$, the Galois group $G$
of the field extension $k/k_{0}$ is the cyclic group $G=\langle\sigma\rangle$,
and there is an element $y\in k$ such that $\{y,\sigma(y),\dots,\sigma^{n-1}(y)\}$
forms a $k_{0}$-basis of $k$ by the normal basis theorem. In particular,
$\deg_{\sigma}y=n\geq5$. Again, from Lemma \ref{lem:n=000020geq=0000205},
we have that every element of $D$ is a product of two commutators.

\subsection{When $\sigma$ is of order $4$.}

We now assume that $\sigma$ is of order $4$. Then $\dim_{k_{0}}k=\vert\langle\sigma\rangle\vert=4$
and there is some $y\in k$ such that $\{y,\sigma(y),\sigma^{2}(y),\sigma^{3}(y)\}$
is a basis of $k$ over $k_{0}$. For the discussions in this subsection,
all the involved subspaces of $k$ and vectors are over the field
$k_{0}$, and ``independent'' means ``linearly independent over
$k_{0}$''.

For $a\in k$, we have $[x,ax^{i}]=xax^{i}-ax^{i+1}=(\sigma(a)-a)x^{i+1}$.
Therefore, $[x,D]=\{\sum_{i=s}^{\infty}a_{i}x^{i}\mid a_{i}\in L\}$,
where $L=\text{Im}(\sigma-1)=\{\sigma(z)-z\mid z\in k\}$, a subspace
of $k$ having $\{y-\sigma(y),\sigma(y)-\sigma^{2}(y),\sigma^{2}(y)-\sigma^{3}(y)\}$
as a basis.
\begin{lem}
\label{lem:l.indep.}If $a,b\in k$ are independent, then $aL+bL=k$.
\end{lem}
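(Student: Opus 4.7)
The plan is to identify $L$ with the kernel of the trace (norm) map and then reduce the claim to a dimension count, using that the trace form on a Galois extension is non-degenerate.

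First, I would observe that $L$ has a clean alternative description. Let $T = 1 + \sigma + \sigma^{2} + \sigma^{3}$, viewed as a $k_{0}$-linear endomorphism of $k$. For every $z \in k$, one has $T(\sigma(z) - z) = 0$, so $L \subseteq \ker T$. Since $k/k_{0}$ is Galois, $T$ is the trace map and is surjective onto $k_{0}$, so $\dim_{k_{0}} \ker T = 3$; we already know $\dim_{k_{0}} L = 3$ from the given basis, hence $L = \ker T$. This is the main structural input; once it is in place, the rest is linear algebra.

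Next, I would compute $\dim_{k_{0}}(aL + bL)$ by the standard formula
\[
\dim_{k_{0}}(aL + bL) = \dim_{k_{0}}(aL) + \dim_{k_{0}}(bL) - \dim_{k_{0}}(aL \cap bL) = 6 - \dim_{k_{0}}(aL \cap bL),
\]
using that multiplication by a nonzero element of $k$ is a $k_{0}$-linear isomorphism of $k$. Since $\dim_{k_{0}} k = 4$, it suffices to show that $\dim_{k_{0}}(aL \cap bL) = 2$.

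To analyze the intersection, set $c = a^{-1}b \in k$; then $al \in bL$ exactly when $cl \in L$, so $aL \cap bL = a\{\,l \in k : T(l) = 0 \text{ and } T(cl) = 0\,\}$. This is the simultaneous kernel of the two $k_{0}$-linear functionals $\varphi_{1}(l) = T(l)$ and $\varphi_{c}(l) = T(cl)$ on the 4-dimensional space $k$, so its dimension is $4 - 2 = 2$ provided these functionals are $k_{0}$-linearly independent. If instead $\varphi_{c} = \lambda \varphi_{1}$ for some $\lambda \in k_{0}$, then $T((c - \lambda)l) = 0$ for all $l \in k$, and non-degeneracy of the trace form on the separable extension $k/k_{0}$ forces $c = \lambda \in k_{0}$, i.e.\ $b \in k_{0} a$, contradicting the independence of $a$ and $b$ over $k_{0}$. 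Thus $\varphi_{1}$ and $\varphi_{c}$ are independent, $\dim_{k_{0}}(aL \cap bL) = 2$, and $aL + bL = k$.

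The only real obstacle is the identification $L = \ker T$; everything afterwards is bookkeeping via the dimension formula and the non-degeneracy of the trace form. If one prefers to avoid invoking non-degeneracy as a black box, the same conclusion can be reached by expanding $\varphi_{1}$ and $\varphi_{c}$ in the normal basis $\{y, \sigma(y), \sigma^{2}(y), \sigma^{3}(y)\}$ and checking directly that linear dependence of the two functionals forces $c$ to have all Galois conjugates equal, hence $c \in k_{0}$.
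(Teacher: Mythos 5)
Your proof is correct, but it takes a genuinely different route from the paper's. You identify $L=\operatorname{Im}(\sigma-1)$ with $\ker T$ for the trace $T=1+\sigma+\sigma^{2}+\sigma^{3}$ (additive Hilbert 90, here obtained cheaply by comparing dimensions), and then reduce the claim to the $k_{0}$-linear independence of the functionals $l\mapsto T(l)$ and $l\mapsto T(cl)$, which you settle by non-degeneracy of the trace form of the separable extension $k/k_{0}$. The paper instead argues entirely inside $k$ with no mention of the trace: since $\dim_{k_{0}}L=3=\dim_{k_{0}}k-1$, it suffices to show $aL\neq bL$, and assuming $a^{-1}bL=L$ the authors show directly that $(a^{-1}b-\alpha)k\subseteq L$ for a suitable $\alpha\in k_{0}$, forcing $a^{-1}b=\alpha\in k_{0}$ and contradicting independence. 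The common core is the same --- multiplication by $c=a^{-1}b$ being compatible with $L$ forces $c\in k_{0}$ --- but your version buys a conceptual explanation (and the exact intersection dimension $\dim_{k_{0}}(aL\cap bL)=2$, which is more than is needed), at the cost of invoking surjectivity and non-degeneracy of the trace; the paper's version is more elementary and self-contained. One small slip: with $c=a^{-1}b$, the condition for $al\in bL$ is $c^{-1}l\in L$, not $cl\in L$; this is harmless since $c\in k_{0}$ if and only if $c^{-1}\in k_{0}$, so the same contradiction results, but you should correct the bookkeeping.
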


\begin{proof}
Since $\dim_{k_{0}}L=3$ and $\dim_{k_{0}}k=4$, it suffices to show
that $aL\not=bL$. Assume on the contrary that $aL=bL$. Equivalently,
$a^{-1}bL=L$. Let $y_{0}\in k\setminus L$ and consider the element
$a^{-1}by_{0}\in k$. One can find $\alpha\in k_{0}$ and $y'\in L$
such that $a^{-1}by_{0}=\alpha y_{0}+y'$. Thus $(a^{-1}b-\alpha)y_{0}=y'\in L$.
If $z\in k$, then $z=\beta y_{0}+y''$ for some $\beta\in k_{0}$
and $y''\in L$. From $\beta(a^{-1}b-\alpha)y_{0}\in\beta L\subseteq L$,
$a^{-1}by''\in a^{-1}bL=L$ and $y''\in L$, we have
\[
(a^{-1}b-\alpha)z=\beta(a^{-1}b-\alpha)y_{0}+a^{-1}by''-\alpha y''\in L.
\]
Thus, $\dim_{k_{0}}(a^{-1}b-\alpha)k\leq\dim_{k_{0}}L=3<\dim_{k_{0}}k=4$.
This is only possible when $a^{-1}b-\alpha=0$, or equivalently, when
$b=\alpha a$, which contradicts the fact that $a$ and $b$ are independent.
Therefore, $aL\not=bL$. From this, we get $\dim_{k_{0}}(aL+bL)>\dim_{k_{0}}aL=3$,
and so $aL+bL=k$.
\end{proof}
We need two subspaces of $L$. Let
\[
k_{1}=\{z\in k\mid\sigma(z)=-z\}\;\text{ and }\;k_{2}=\{z\in k\mid\sigma^{2}(z)=-z\}.
\]
 Clearly, $k_{1}$ and $k_{2}$ are $\sigma$-invariant subspaces
of $k$. Let us investigate $k_{1}$ and $k_{2}$ further.

For $a=\sum_{j=0}^{3}a_{j}\sigma^{j}(y)\in k_{1}$, where $a_{j}\in k_{0}$,
we have
\[
-\sum_{j=0}^{3}a_{j}\sigma^{j}(y)=-a=\sigma(a)=\sum_{j=0}^{3}a_{j}\sigma^{j+1}(y),
\]
and so $a_{j+1}=-a_{j}$ for $j\in\{0,1,2\}$ and $a_{3}=-a_{0}$.
Hence $a=a_{0}y-a_{0}\sigma(y)+a_{0}\sigma^{2}(y)-a_{0}\sigma^{3}(y)=a_{0}e_{1}$,
where $e_{1}=y-\sigma(y)+\sigma^{2}(y)-\sigma^{3}(y)=\sigma(\sigma(y)-\sigma^{2}(y))-(\sigma(y)-y)\in L$.
Therefore, $k_{1}=\text{span}_{k_{0}}\{e_{1}\}$, and is a subspace
of $L$.

For $b=b_{0}y+b_{1}\sigma(y)+b_{2}\sigma^{2}(y)+b_{3}\sigma^{3}(y)\in k_{2}$,
where $b_{0},b_{1},b_{2},b_{3}\in k_{0}$, we have
\[
-\sum_{j=0}^{3}b_{j}\sigma^{j}(y)=-b=\sigma^{2}(b)=b_{0}\sigma^{2}(y)+b_{1}\sigma^{3}(y)+b_{2}y+b_{3}\sigma(y),
\]
and so $b_{0}=-b_{2}$ and $b_{1}=-b_{3}$. As a result, $b=b_{0}e_{2}+b_{1}\sigma(e_{2})$,
where $e_{2}=y-\sigma^{2}(y)\in k_{2}$. It is clear that $e_{2}$
and $\sigma(e_{2})$ are independent, and $k_{2}=\text{span}\{e_{2},\sigma(e_{2})\}$.
Note that $e_{2}=y-\sigma^{2}(y)=(\sigma-1)\left(-y-\sigma(y)\right)\in L$,
and $\sigma(e_{2})=\sigma(y)-\sigma^{3}(y)=(\sigma^{2}-1)(-\sigma(y))=(\sigma-1)(-\sigma^{2}(y)-\sigma(y))\in L$.
We have $k_{2}=\text{span}\{e_{2},\sigma(e_{2})\}$, and is a subspace
of $L$ as well.

We also note that if $z\in k_{2}\setminus\{0\}$, then $1=\sigma^{2}(zz^{-1})=\sigma^{2}(z)\sigma^{2}(z^{-1})=-z\sigma^{2}(z^{-1})$,
and so $\sigma^{2}(z^{-1})=-z^{-1}$. This shows that $z^{-1}\in k_{2}$.
We shall use this fact to prove the next result.
\begin{lem}
\label{lem:special_conditions}Let $c\in k$ be such that $\sigma^{2}(c)\not=c$
or $\sigma^{2}(c)=c$ but $c\not\in k_{1}$, then there exist $a,b\in L$
such that $c=ab$ and $aL+\sigma^{i}(b)L=k$ for all $i\in\mathbb{Z}$.
\end{lem}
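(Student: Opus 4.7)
The plan is to leverage the $\sigma$-eigenspace decomposition of $k$ developed in this subsection. Assuming $\mathrm{char}\,k \neq 2$, the analysis preceding the lemma shows $k_1 \subseteq L$ (dimension $1$) and $k_2 \subseteq L$ (dimension $2$); since $\dim_{k_0} L = 3$ and $k_1 \cap k_2 = 0$ in this characteristic, we get $L = k_1 \oplus k_2$ and therefore $k = k_0 \oplus k_1 \oplus k_2$. A short $\sigma$-eigenspace calculation yields the multiplication table $k_1 \cdot k_1 \subseteq k_0$, $k_1 \cdot k_2 \subseteq k_2$, $k_2 \cdot k_2 \subseteq F_2 = k_0 \oplus k_1$, and each of $k_1, k_2$ is closed under taking inverses (already noted for $k_2$ in the excerpt). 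Writing $c = c_0 + c_1 + c_2$ with $c_i \in k_i$, the hypothesis amounts to $(c_0, c_2) \neq (0,0)$, leading naturally to a two-case split.

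\textbf{Case A: $c_2 \neq 0$.} I would pick any $\beta_1 \in k_1 \setminus \{0\}$, set $a = c_2 \beta_1^{-1} \in k_2$ (by the table), and define $b = c/a = c_0 \beta_1 c_2^{-1} + c_1 \beta_1 c_2^{-1} + \beta_1$. Term-by-term, the first two summands lie in $k_2$ and the last in $k_1$, so $b \in L$. Since $a \in k_2$ has zero $k_1$-component while each $\sigma^i(b)$ has $k_1$-component $(-1)^i \beta_1 \neq 0$, no $k_0$-relation $\sigma^i(b) = \lambda a$ is possible; by Lemma~\ref{lem:l.indep.}, $aL + \sigma^i(b)L = k$ for every $i$.

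\textbf{Case B: $c_2 = 0$, so $c \in F_2$ with $c_0 \neq 0$.} Take any $b \in k_2 \setminus \{0\}$ and set $a = c/b \in F_2 \cdot k_2 \subseteq k_2$. Since $\sigma^2|_{k_2} = -\mathrm{id}$, the four independence conditions collapse to the two requirements $b^2 \notin k_0 c$ and $b\sigma(b) \notin k_0 c$. A direct check gives $\sigma(b\sigma(b)) = -b\sigma(b)$, so $b\sigma(b) \in k_1$; but $k_0 c \cap k_1 = \{0\}$ because $c_0 \neq 0$, so the second condition is automatic. For the first, the bad set is contained in $\bigcup_{\lambda \in k_0^*}\{b : b^2 = \lambda c\}$, with each fiber having at most two elements, hence the bad set has size at most $2(|k_0| - 1)$; this is strictly less than $|k_2^*| = |k_0|^2 - 1$ for any $|k_0| \ge 2$, ensuring a valid $b$ exists. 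Applying Lemma~\ref{lem:l.indep.} again gives the independence conclusion.

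\textbf{Main obstacle.} The most delicate point is the characteristic-$2$ situation, where $k_1 = k_0$ and $k_2 = F_2$, so the clean decomposition $L = k_1 \oplus k_2$ fails (their sum only has dimension~$2$); here one would need a substitute complement inside $L$ and repeat the case analysis. A secondary concern is the small-finite-field regime in Case B, which the cardinality inequality above handles, but which could be sharpened if a cleaner construction of $b$ (independent of counting) is desired. Case A, by contrast, is the smoother side because the explicit $\beta_1 \in k_1$ placed inside $b$ forces a nonzero $k_1$-component that automatically prevents every $\sigma^i$-dependence with $a \in k_2$.
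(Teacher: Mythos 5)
Your route is genuinely different from the paper's: you build everything on the decomposition $k=k_{0}\oplus k_{1}\oplus k_{2}$ and a multiplication table for the pieces, whereas the paper never decomposes $k$ at all --- it gets its first case from the dimension count $\dim_{k_{0}}(ck_{2})+\dim_{k_{0}}L>\dim_{k_{0}}k$ and its second case from explicit manipulations with $e_{2}$ and $\sigma(e_{2})$. Your Case A is correct (in characteristic $\neq 2$) and is arguably cleaner than the paper's corresponding case: planting the nonzero $k_{1}$-component $\beta_{1}$ inside $b$ while keeping $a\in k_{2}$ kills every possible $k_{0}$-dependence between $a$ and $\sigma^{i}(b)$ in one stroke.

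There are, however, two genuine gaps. The first is the characteristic-$2$ case, which you flag but leave open; it is not a removable technicality, since the lemma carries no characteristic hypothesis. In characteristic $2$ one has $k_{1}=k_{0}$ and $k_{2}=\{z\mid\sigma^{2}(z)=z\}\supseteq k_{1}$, so $k_{1}+k_{2}=k_{2}$ is only $2$-dimensional; both $L=k_{1}\oplus k_{2}$ and $k=k_{0}\oplus k_{1}\oplus k_{2}$ fail, and your case split via the components $c_{0},c_{2}$ has no meaning there. The paper's argument, by contrast, uses only that $k_{2}$ is a $2$-dimensional, $\sigma$-invariant, inverse-closed subspace of $L$, and goes through in every characteristic. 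The second gap is in your Case B: the inequality $2(|k_{0}|-1)<|k_{0}|^{2}-1$ proves nothing when $k_{0}$ is infinite (both sides are then the cardinal $|k_{0}|$), and nothing in the setup forces $k_{0}$ to be finite. This one is fixable without counting: if $b_{0}\in k_{2}\setminus\{0\}$ is ``bad'', i.e.\ $b_{0}^{2}\in k_{0}^{*}c$, then any other bad $b$ satisfies $(bb_{0}^{-1})^{2}\in k_{0}$ with $bb_{0}^{-1}=\alpha+\beta\in k_{0}\oplus k_{1}$, whence the $k_{1}$-component $2\alpha\beta$ of $(bb_{0}^{-1})^{2}$ vanishes and $\alpha\beta=0$; so the bad set lies in the union of the two lines $k_{0}b_{0}$ and $k_{1}b_{0}$, which cannot cover the $2$-dimensional space $k_{2}$. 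With that repair --- and still only for characteristic $\neq 2$ --- your argument is sound.
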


\begin{proof}
Assume first that $\sigma^{2}(c)\not=c$. Set $W=\{cz\mid z\in k_{2}\}$.
Then $W$ is a subspace of $k$ with $\dim_{k_{0}}W=\dim_{k_{0}}k_{2}=2$.
From $\dim_{k_{0}}W+\dim_{k_{0}}L=5>4=\dim_{k_{0}}k$, we have $W\cap L\not=\{0\}$.
Suppose that $ca'=b'$, where $a'\in k_{2}$ and $b'\in L$. Write
$a'=a^{-1}$, where $a\in k_{2}$, we get $c=ab'$. Thus, $\sigma^{2}(c)=\sigma^{2}(ab')=-a\sigma^{2}(b')$.
Since $\sigma^{2}(c)\not=c=ab'$, we see that $\sigma^{2}(b')\not=-b'$.
Consequently, $\sigma^{2}(b')\not\in k_{2}$, and so $a$ and $\sigma^{2}(b')$
are independent. Let $b=\sigma^{2}(b')$. We have $aL+bL=k$ by Lemma
\ref{lem:l.indep.}. Notice that $\sigma^{i}(b)\not\in k_{2}$ for
all $i\in\mathbb{Z}$ as $\sigma(k_{2})=k_{2}$ and $b\not\in k_{2}$.
Thus, for every $i\in\mathbb{Z}$, $a$ and $\sigma^{i}(b)$ are independent
as well; hence $aL+\sigma^{i}(b)L=k$ by Lemma \ref{lem:l.indep.}.

Next, we assume that $\sigma^{2}(c)=c$ but $c\not\in k_{1}$. If
both $\{e_{2},e_{2}^{-1}c\}$ and $\{e_{2},\sigma(e_{2}^{-1}c)\}$
are independent, we can take $a=e_{2}$ and $b=e_{2}^{-1}c$ to have
$ab=c$ with $\{a,\sigma^{i}(b)\}$ is independent for $i=0$ and
$1$. From $\sigma^{2}(b)=\sigma^{2}(e_{2}^{-1})\sigma^{2}(c)=-e_{2}^{-1}c=-b$
and $\sigma^{3}(b)=-\sigma(b)$, we easily deduce that $\{a,\sigma^{i}(b)\}$
is independent for all $i\in\mathbb{Z}$.

Thus, we are left with the situation that not both $\{e_{2},e_{2}^{-1}c\}$
and $\{e_{2},\sigma(e_{2}^{-1}c)\}$ are independent. We cannot have
$\sigma(e_{2}^{-1}c)=\alpha e_{2}$ for any $\alpha\in k_{0}$. Otherwise,
$\sigma(c)=\alpha e_{2}\sigma(e_{2})$, and so $\sigma^{2}(c)=-\alpha\sigma(e_{2})e_{2}=-\alpha(c)$,
showing that $\alpha(c)$ is an element of $k_{1}$. But then $c\in\sigma^{-1}(k_{1})=k_{1}$,
a contradiction. Thus, $\{e_{2},\sigma(e_{2}^{-1}c)\}$ is independent.
Consequently, we have to have $e_{2}^{-1}c=\beta e_{2}$ for some
$\beta\in k_{0}$. So, $c=\beta e_{2}^{2}$.

To complete the proof, we shall take $a=\sigma(e_{2})$, $b=a^{-1}c=\beta\sigma(e_{2}^{-1})e_{2}^{2}$,
and show that for every $i\in\mathbb{Z}$, $\{a,\sigma^{i}(b)\}$
is independent. Then from Lemma \ref{lem:l.indep.} we would get $aL+\sigma^{i}(b)L=k$.

Let us show that $a$ and $b$ are independent. Suppose that this
is not the case, and let $a=\gamma b$ for some $\gamma\in k_{0}$.
Then $\sigma(e_{2}^{2})=\sigma(e_{2})^{2}=a^{2}=a\cdot\gamma b=\gamma c=\gamma\beta e_{2}^{2}$,
and so
\[
e_{2}^{2}=(-e_{2})^{2}=(\sigma^{2}(e_{2}))^{2}=\sigma^{2}(e_{2}^{2})=\sigma(\sigma(e_{2}^{2}))=\gamma\beta\sigma(e_{2}^{2})=(\gamma\beta)^{2}e_{2}^{2}.
\]
Therefore, $(\gamma\beta)^{2}=1$. If $\gamma\beta=-1$, then $\sigma(e_{2}^{2})=-e_{2}^{2}$,
i.e., $e_{2}^{2}\in k_{1}$. But then $c=\beta e_{2}^{2}\in k_{1}$,
which is not the case. If $\gamma\beta=1$, then $\sigma(e_{2}^{2})=e_{2}^{2}$,
and so $(\sigma(e_{2})-e_{2})(\sigma(e_{2})+e_{2})=0$. Since $\{y,\sigma(y),\sigma^{2}(y),\sigma^{3}(y)\}$
is a basis of $k$, we have $\sigma(e_{2})-e_{2}=\sigma(y)-\sigma^{3}(y)-y+\sigma^{2}(y)\not=0$
and $\sigma(e_{2})+e_{2}=\sigma(y)-\sigma^{3}(y)+y-\sigma^{2}(y)\not=0$.
Thus, $\gamma\beta=1$ leads to a contradiction. Therefore, $a$ and
$b$ are indeed independent.

Next, assume that $a$ and $\sigma(b)$ are dependent. Thus, $\sigma(b)=\zeta a=\zeta\sigma(e_{2})$
for some $\zeta\in k_{0}$. Then $\zeta\sigma(e_{2})=\sigma(b)=\sigma(\beta\sigma(e_{2}^{-1})e_{2}^{2})=\beta\sigma^{2}(e_{2}^{-1})\sigma(e_{2}^{2})=-\beta e_{2}^{-1}\sigma(e_{2})^{2}$,
this implies that $e_{2}^{-1}\sigma(e_{2})=-\beta^{-1}\zeta\in k_{0}$.
Thus,
\[
e_{2}^{-1}\sigma(e_{2})=\sigma(e_{2}^{-1}\sigma(e_{2}))=\sigma(e_{2}^{-1})\sigma^{2}(e_{2})=-\sigma(e_{2})^{-1}e_{2},
\]
and so $\sigma(e_{2}^{2})=-e_{2}^{2}$. But then $e_{2}^{2}\in k_{1}$,
yielding $c=\beta e_{2}^{2}\in k_{1}$, a contradiction. Therefore,
$a$ and $\sigma(b)$ have to be independent.

Now, $\sigma^{2}(b)=\sigma^{2}(\sigma(e_{2}^{-1})c)=\sigma(\sigma^{2}(e_{2}^{-1}))\sigma^{2}(c)=-\sigma(e_{2}^{-1})c=-b$,
$\sigma^{3}(b)=\sigma(\sigma^{2}(b))=-\sigma(b)$, $\sigma^{-3}(b)=\sigma^{4-3}(b)=\sigma(b)$,
etc. Therefore, $\{a,\sigma^{2i}(b)\}=\{a,\pm b\}$ and $\{a,\sigma^{2i+1}(b)\}=\{a,\pm\sigma(b)\}$
for all $i\in\mathbb{Z}$. This shows that $\{a,\sigma^{i}(b)\}$
is independent for every $i\in\mathbb{Z}$. The proof is now complete.
\end{proof}
\begin{lem}
\label{lem:special-a_s}Let $f=\sum_{i=s}^{\infty}a_{s}x^{s}\in D$
with $a_{s}\not\in k_{1}$. Then $f=f_{1}f_{2}$ for some $f_{1}=\sum_{i=s}^{\infty}b_{i}x^{i}$
and $f_{2}=\sum_{j=0}^{\infty}c_{j}x^{j}$ in $D$ with $b_{i},c_{j}\in L$
for all $i$ and $j$. Consequently, $f$ is a product of two commutators
in $D$.
\end{lem}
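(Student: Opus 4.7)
The plan is to construct the coefficients of $f_1=\sum_{i\geq s}b_i x^i$ and $f_2=\sum_{j\geq 0}c_j x^j$ recursively on the index, using Lemma~\ref{lem:special_conditions} to handle both the leading term and every higher-order correction.

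\textbf{Step 1 (leading coefficients).} Since $a_s\notin k_1$, the hypothesis of Lemma~\ref{lem:special_conditions} is met (regardless of whether $\sigma^2(a_s)=a_s$ or not), producing $a,b\in L$ with $a_s=ab$ and $aL+\sigma^i(b)L=k$ for every $i\in\mathbb{Z}$. Because $L$ is $\sigma$-invariant, I set
\[
b_s=a,\qquad c_0=\sigma^{-s}(b)\in L,
\]
so that the coefficient of $x^s$ in $f_1f_2$ is $b_s\sigma^s(c_0)=ab=a_s$, as required.

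\textbf{Step 2 (the recursion).} For $t\geq 1$, the coefficient of $x^{s+t}$ in $f_1f_2$ equals $a_{s+t}$ iff
\[
b_s\sigma^s(c_t)+b_{s+t}\sigma^{s+t}(c_0)=a_{s+t}-\sum_{i=s+1}^{s+t-1}b_i\sigma^i(c_{s+t-i})=:\alpha_t,
\]
where $\alpha_t\in k$ depends only on coefficients chosen at earlier stages of the induction. I need to find $c_t,b_{s+t}\in L$ solving this linear equation. Notice that $\sigma^{s+t}(c_0)=\sigma^{s+t}(\sigma^{-s}(b))=\sigma^t(b)$, so Lemma~\ref{lem:special_conditions} gives
\[
b_sL+\sigma^{s+t}(c_0)L=aL+\sigma^t(b)L=k,
\]
and in particular $\alpha_t=au+\sigma^t(b)v$ for some $u,v\in L$. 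Using commutativity of $k$ and $\sigma$-invariance of $L$, set $c_t=\sigma^{-s}(u)\in L$ and $b_{s+t}=v\in L$; the equation is satisfied.

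\textbf{Step 3 (conclusion).} Iterating Step~2 produces $f_1,f_2\in D$ with all coefficients in $L$, so by the description of $[x,D]$ given just before Lemma~\ref{lem:l.indep.}, both $f_1$ and $f_2$ lie in $[x,D]$. Writing $f_1=[x,g_1]$ and $f_2=[x,g_2]$ for suitable $g_1,g_2\in D$ exhibits $f=f_1f_2=[x,g_1]\cdot[x,g_2]$ as a product of two commutators.

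The only substantive content is Step~2, and Lemma~\ref{lem:special_conditions} was tailored precisely for this purpose: its strong conclusion that $aL+\sigma^i(b)L=k$ for \emph{every} $i\in\mathbb{Z}$ (not merely for $i=0$) is exactly what is required to solve the two-unknown equation at every stage of the recursion. Once this is observed, the rest is an index-bookkeeping induction with no further obstacle.
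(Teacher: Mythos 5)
Your proposal is correct and follows essentially the same route as the paper: fix the leading coefficients via Lemma \ref{lem:special_conditions}, then solve each subsequent coefficient equation using $b_sL+\sigma^{s+t}(c_0)L=aL+\sigma^{t}(b)L=k$, and conclude via the description of $[x,D]$. The only differences are cosmetic (you absorb the twist $\sigma^{-s}$ directly into $c_t$ where the paper introduces auxiliary elements $c_t'$).
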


\begin{proof}
We have either $\sigma^{2}(a_{s})\not=a_{s}$ or $\sigma^{2}(a_{s})=a_{s}$.
Since $a_{s}\not\in k_{1}$, Lemma~\ref{lem:special_conditions}
applies, and so there are $b_{s},c_{0}'\in L$ such that $a_{s}=b_{s}c_{0}'$
and $b_{s}L+\sigma^{\ell}(c_{0}')L=k$ for all $\ell\in\mathbb{Z}$.
Let $c_{0}=\sigma^{-s}(c_{0}')$. Then $a_{s}=b_{s}\sigma^{s}(c_{0})$
and $b_{s}L+\sigma^{\ell}(c_{0})L=k$ for all $\ell\in\mathbb{Z}$.
Next, we chose $b_{s+1},c_{1}'\in L$ such that $b_{s}c_{1}'+\sigma^{s+1}(c_{0})b_{s+1}=a_{s+1}$.
By taking $c_{1}=\sigma^{-s}(c_{1}')$, we then have $b_{s}\sigma^{s}(c_{1})+b_{s+1}\sigma^{s+1}(c_{0})=a_{s+1}$.
Assume that $t\geq1$ and we have chosen $b_{s+t},c_{t}'\in L$ such
that
\[
b_{s}c_{t}'+\sigma^{s+t}(c_{0})b_{s+t}=a_{s+t}-\sum_{r=1}^{t-1}b_{s+r}\sigma^{s+r}(c_{t-r}),
\]
and set $c_{t}=\sigma^{-s}(c_{t}')$. As $b_{s}L+\sigma^{s+t+1}(c_{0})L=k$,
we can find $b_{s+t+1}$ and $c_{t+1}'$ from $L$ such that
\[
b_{s}c_{t+1}'+b_{s+t+1}\sigma^{s+t+1}(c_{0})=a_{s+t+1}-\sum_{r=1}^{t}b_{s+r}\sigma^{s+r}(c_{t+1-r}).
\]
Set $c_{t+1}=\sigma^{-s}(c_{t+1}')$. This process can be continued
and we eventually obtain $f_{1}$ and $f_{2}$ as required.

As $f_{1},f_{2}\in\{\sum_{i=s}^{\infty}a_{i}x^{i}\mid a_{i}\in L\}=[x,D]$,
the last statement is clear.
\end{proof}
We are ready to prove the order $4$ case.
\begin{proof}[Proof of the case when $\sigma$ is of order $4$]
Let $f=\sum_{i=s}^{\infty}a_{i}x^{i}\in D$ with $a_{s}\neq0$. Write
$s=4q+r$, where $q,r\in\mathbb{Z}$ and $0\leq r<4$. For $r\in\{0,1,3\}$,
we set $u,v\in\mathbb{Z}$ as the following:
\[
\begin{cases}
u=s+1\text{ and }v=-1, & \text{if }r=0\text{ or }1;\\
u=s-1\text{ and }v=1, & \text{if }r=3.
\end{cases}
\]
Then $s=u+v$, $4\nmid(u-v)$, $4\nmid u$ and $4\nmid v$, and $f$
can be written a product of two commutators by Lemma \ref{lem:i+j,i=000020neq=000020j}.

Suppose that $s=4q+2$. In view of Lemma~\ref{lem:special-a_s},
we only need to care about the situation when $a_{s}\in k_{1}$. Recall
that we have a basis $\{y,\sigma(y),\sigma^{2}(y),\sigma^{3}(y)\}$
for $k$. Consider $g=yfy^{-1}=\sum_{i=s}^{\infty}b_{s}x^{s}$, where
$b_{s}=ya_{s}\sigma^{s}(y^{-1})=a_{s}y\sigma^{2}(y^{-1})$. We claim
that $b_{s}\not\in k_{1}$. On the contrary, assume $\sigma(b_{s})=-b_{s}$.
With $\sigma(a_{s})=-a_{s}$, we get $-a_{s}y\sigma^{2}(y^{-1})=-b=\sigma(b)=\sigma(a_{s}y\sigma^{2}(y^{-1}))=-a_{s}\sigma(y\sigma^{2}(y^{-1}))$.
Then $\sigma(y\sigma^{2}(y^{-1}))=y\sigma^{2}(y^{-1})$, which says
that $y\sigma^{2}(y^{-1})\in k_{0}$. Let $\alpha=y\sigma^{2}(y^{-1})=y\sigma^{2}(y)^{-1}$.
Then $y-\alpha\sigma^{2}(y)=0$, contradicting the fact that $\{y,\sigma(y),\sigma^{2}(y),\sigma^{3}(y)\}$
is a basis of $k$. Therefore, $b_{s}$ cannot be in $k_{1}$. By
Lemma~\ref{lem:special-a_s}, $yfy^{-1}=[x,f_{1}][x,f_{2}]$ for
some $f_{1},f_{2}\in D$. Therefore, we have
\[
f=y^{-1}[x,f_{1}][x,f_{2}]y=[y^{-1}xy,y^{-1}f_{1}y][y^{-1}xy,y^{-1}f_{2}y].
\]
This finishes the proof for $\sigma$ being of order $4$, and completes
the proof of the Theorem.
\end{proof}

\section*{Acknowledgment}

The first author would like to express his sincere gratitude to Professor
Matej Brešar for his invaluable support during his visit to the University
of Ljubljana and the University of Maribor. The original question,
posed in the paper by Gardella and Thiel, was highlighted by Professor
Brešar.

\end{document}